\documentclass[10pt]{amsart}

\usepackage{amsfonts,amssymb,amscd,amstext}

\usepackage[charter]{mathdesign}
\usepackage[utf8]{inputenc}

\usepackage{graphicx}
\usepackage[colorlinks=true,linkcolor=blue,citecolor=red]{hyperref}

\pretolerance=0

\renewcommand{\le}{\leqslant}
\renewcommand{\ge}{\geqslant}
\renewcommand{\le}{\leqslant}
\renewcommand{\ge}{\geqslant}
\newcommand{\ptl}{\partial}
\newcommand{\rr}{{\mathbb{R}}}
\newcommand{\la}{\lambda}

\newcommand{\h}{H}
\newcommand{\esf}{\mathbb{S}}

\newcommand{\nn}{\mathbb{N}}

\newcommand{\Sg}{\Sigma}
\newcommand{\sg}{\sigma}
\newcommand{\Om}{\Omega}
\newcommand{\om}{\omega}
\newcommand{\eps}{\varepsilon}

\newcommand{\vol}[1]{|#1|}

\usepackage{color}
\definecolor{grey}{rgb}{.7,.7,.7}

\DeclareMathOperator{\divv}{div}

\newtheorem{theorem}{Theorem}[section]
\newtheorem{proposition}[theorem]{Proposition}
\newtheorem{lemma}[theorem]{Lemma}
\newtheorem{corollary}[theorem]{Corollary}

\theoremstyle{definition}

\newtheorem{remark}[theorem]{Remark}

\theoremstyle{remark}

\numberwithin{equation}{section}

\begin{document}

\title[Large isoperimetric regions in $M\times \rr^k$]{Large isoperimetric regions in the product \\ of a compact manifold with Euclidean space}

\author[M.~Ritor\'e]{Manuel Ritor\'e} \address{Departamento de
Geometr\'{\i}a y Topolog\'{\i}a \\
Universidad de Granada \\ E--18071 Granada \\ Espa\~na}
\email{ritore@ugr.es}
\author[E.~Vernadakis]{Efstratios Vernadakis} \address{Departamento de
Geometr\'{\i}a y Topolog\'{\i}a \\
Universidad de Granada \\ E--18071 Granada \\ Espa\~na}
\email{stratos@ugr.es}

\date{\today}

\thanks{Both authors have been supported by MICINN-FEDER MTM2010-21206-C02-01 and MINECO-FEDER MTM2013-48371-C2-1-P grants, and by Junta de Andaluc\'{\i}a grants FQM-325 and P09-FQM-5088}

\begin{abstract}
Given a compact Riemannian manifold $M$ without boundary, we show that large isoperimetric regions in $M\times\rr^k$ are tubular neighborhoods of $M\times\{x\}$, with $x\in\rr^k$.
\end{abstract}

\subjclass[2010]{49Q10,49Q20}
\keywords{Isoperimetric inequality; isoperimetric regions; Riemannian cylinders; symmetrization; anisotropic scaling; density estimates; stable constant mean curvature surfaces}

\maketitle

\thispagestyle{empty}

\bibliographystyle{abbrv}

\section{Introduction}
We consider the \emph{isoperimetric problem} of minimizing perimeter under a given volume constraint in $N=M\times\rr^k$, where $ \rr^k$ is the $k$-dimensional Euclidean space and $M$ is an $m$-dimensional compact Riemannian manifold without boundary. Our main result is the following:

\begin{theorem}
\label{th:main}
Let $M$ be a compact Riemannian manifold. There exists a constant $v_0>0$ such that any isoperimetric region in $M\times\rr^k$ of volume $v\ge v_0$ is a tubular neighborhood of $M\times\{x\}$, with $x\in\rr^k$.
\end{theorem}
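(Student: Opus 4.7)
The plan is to reduce the problem to a strict-stability analysis of the cylinders $M\times \clb(0,r)$.

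First, I would establish existence of an isoperimetric region for every volume $v$ via concentration-compactness, using $\rr^k$-translation invariance to prevent loss of mass at infinity together with the compactness of $M$. Denoting the isoperimetric profile by $I(v)$, the cylinder competitor $M\times\clb(0,r)$ with $\omega_k r^k=v/\vol{M}$, where $\omega_k$ is the Euclidean volume of the unit ball in $\rr^k$, gives the upper bound $I(v)\le k\omega_k^{1/k}\vol{M}^{1/k}v^{(k-1)/k}$. I would obtain the matching asymptotic lower bound by combining $\rr^k$-slicing of the perimeter with the Euclidean isoperimetric inequality applied to the level sets of the slice-measure function $u(y):=\mathcal{H}^m(E\cap(M\times\{y\}))$, supplemented by the isoperimetric inequality on $M$ applied to the ``transition slices'' $\{y:0<u(y)<\vol{M}\}$. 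This forces any sequence of minimizers $E_n$ with $v_n\to\infty$ to satisfy, after translation in $\rr^k$, $\vol{E_n\triangle(M\times\clb(0,r_n))}=o(v_n)$ with $\omega_k r_n^k=v_n/\vol{M}$, and to have a transition set of vanishing relative measure.

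Combining this $L^1$-closeness with density estimates and Allard-type regularity uniform in the unbounded ambient, and the mean-curvature bound $H_n=I'(v_n)=O(r_n^{-1})$, one shows that $\partial E_n$ is a smooth normal graph of a function $f_n$ over the model cylinder $M\times\partial B(0,r_n)$ with $\|f_n\|_{C^{2,\alpha}}\to 0$. The key rigidity ingredient is the strict stability of the cylinder: on $M\times\partial B(0,r)\subset M\times\rr^k$ the Jacobi operator is $J=-\Delta-(k-1)/r^2$, since the ambient Ricci vanishes when $\nu\in\rr^k$ and $|A|^2=(k-1)/r^2$. Separation of variables on the Riemannian product $M\times\esf^{k-1}(r)$ gives the spectrum $\{\mu_i+\nu_j-(k-1)/r^2\}$, with $\mu_i\in\mathrm{spec}(-\Delta_M)$ and $\nu_j\in\mathrm{spec}(-\Delta_{\esf^{k-1}(r)})$; the first nonzero spherical eigenvalue is $(k-1)/r^2$, realized by the $k$-dimensional space of translations in $\rr^k$. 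Once $r^2\lambda_1(M)>k-1$, these are the only zero modes, and every other mode satisfies the strict inequality $\mu_i+\nu_j>(k-1)/r^2$, yielding a uniform spectral gap.

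Fixing the centroid of $E_n$ in $\rr^k$ kills the translational modes; the constant mean curvature equation for $f_n$ becomes a nonlinear elliptic equation $Jf_n=c_n+\mathcal{Q}(f_n,\nabla f_n,\nabla^2 f_n)$ with $\mathcal{Q}$ at least quadratic and the volume constraint $\int f_n=0$. The spectral gap, combined with the $C^{2,\alpha}$-smallness of $f_n$, then forces $f_n\equiv 0$ for $n$ large, so $E_n$ is a tubular neighborhood of $M\times\{x_n\}$. The main obstacle is the second step — upgrading $L^1$-smallness to $C^{2,\alpha}$-closeness uniformly in the unbounded ambient $M\times\rr^k$ — which requires density and monotonicity estimates tailored to the product geometry; the final spectral rigidity is then a clean separation-of-variables argument.
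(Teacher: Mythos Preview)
Your overall strategy is reasonable, but there is a genuine gap in the spectral rigidity step. You assert that once $r^2\lambda_1(M)>k-1$ the Jacobi operator on $M\times\esf^{k-1}(r)$ has a \emph{uniform} spectral gap above the kernel spanned by constants and $\rr^k$-translations. This is false as $r=r_n\to\infty$: the pure-sphere modes $(\mu_0,\nu_j)$ with $j\ge 2$ have $J$-eigenvalue $\nu_j-(k-1)/r^2$; for $j=2$ this is $2k/r^2-(k-1)/r^2=(k+1)/r^2$, which is positive but tends to $0$. Thus the gap degenerates along your sequence, and the contraction argument forcing $f_n\equiv 0$ would require $\|f_n\|$ of order $o(r_n^{-2})$ rather than the $o(1)$ that $L^1$-closeness plus Allard deliver. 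Without an additional ingredient you cannot conclude that $f_n\equiv 0$.

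The paper circumvents exactly this difficulty by first Schwarz-symmetrizing in the $\rr^k$ factor, so that it suffices to treat $O(k)$-invariant competitors; on $O(k)$-invariant functions (i.e.\ functions on $M$) the relevant spectrum is $\{\mu_i-|\sg|^2:i\ge 1\}$, whose bottom $\lambda_1(M)-O(r^{-2})$ is uniformly positive for large $r$. Moreover the paper uses the \emph{anisotropic} dilation $\varphi_t(p,x)=(p,tx)$ to bring all symmetrized minimizers to a single fixed strictly $O(k)$-stable tube, proves $L^1$ and then Hausdorff convergence there via tube-adapted density estimates, and invokes the White/Grosse--Brauckmann strict-stability theorem to identify the rescaled sets with that tube. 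A separate closing argument (equality in the perimeter-scaling inequality forces the normal to lie in $\rr^k$, and then the Euclidean isoperimetric inequality applied fibrewise) shows the original, unsymmetrized region was already a tube. Your route is salvageable if you insert the symmetrization step: the spherical modes then disappear, the translational zero modes are absent, and your graph/spectral argument can be run at a fixed scale with a genuine uniform gap.
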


This result, in case $k=1$, was first proven by Duzaar and Steffen \cite[Prop.~2.11]{du-st}. As observed by Morgan, an alternative proof for $k=1$ can be given using the monotonicity formula and properties of the isoperimetric profile of $M\times\rr$ (see \cite[Cor.~4.12]{MR3385175} for a proof when $M$ is a convex body). Gonzalo considered the general problem in his Ph.D. Thesis \cite{gonzalo}. In $\mathbb{S}^1\times\rr^k$, the result follows from the classification of isoperimetric regions by Pedrosa and Ritor\'e \cite{MR1757077}. Large isoperimetric regions in asymptotically flat manifolds have been recently characterized by Eichmair and Metzger \cite{MR3127063}. It is worth mentioning that W.-T. Hsiang and W.-Y. Hsiang \cite{MR1010154} completely solved the isoperimetric problem in products of Euclidean and hyperbolic spaces. Morgan \cite{MR2249614}, after Barth\'e \cite{MR1904555}, using results by Ros \cite{MR2167260}, provides a lower bound of the isoperimetric profile of a Riemannian product in terms of concave lower bounds of the isoperimetric profiles of the factors.

In our proof we use symmetrization and show in Corollary~\ref{cor:OmitoTL1} that anisotropic scaling of symmetrized isoperimetric regions of large volume  $L^1$-converge to a tubular neighborhood of $M\times\{0\}$. This convergence is improved in Lemma~\ref{lem:OmitoTHdrf} to Hausdorff convergence of the boundaries using the density estimates on tubes from Lemma~\ref{lem:lerilm42prd}, similar to the ones obtained by Ritor\'e and Vernadakis \cite{MR3335407}. Results of White \cite{MR1305283} and Grosse-Brauckmann \cite{MR1432843} on stable submanifolds then imply that the scaled boundaries are cylinders, see Theorem~\ref{thm:mainstable}. For small dimensions, it is also possible to use a result by Morgan and Ros \cite{MR2652015} to get the same conclusion only using $L^1$-convergence. Once it is shown that the symmetrized set is a tube, it is not difficult to prove that the original isoperimetric region is also a tube.

After the distribution of this manuscript, Gonzalo informed us that he had obtained a proof of Theorem~\ref{th:main} in \cite{1312.6311}. His techniques are different from ours and similar to the ones used in \cite{gonzalo}.

\bigskip

Given a measurable set $E\subset N$, their \emph{perimeter} and \emph{volume} will be denoted by $P(E)$ and $\vol{E}$, respectively. We refer the reader to Maggi's book \cite{MR2976521} for background on finite perimeter sets. The $r$-dimensional Hausdorff measure of a set $E$ will be denoted by $H^r(E)$.

On $M\times\rr^k$ we shall consider

the \emph{anisotropic dilation} of ratio $t>0$  defined by
\begin{equation*}
\varphi_t(p,x)=(p,tx), \qquad (p,x)\in M\times\rr^k.
\end{equation*}
Since the Jacobian of the map $\varphi_t$ is $t^k$, we have
\begin{equation}
\label{eq:volvarphi}
\vol{\varphi_t(E)}=t^k\vol{E},\quad\text{for any measurable set\ } E\subset M\times\rr^k.
\end{equation}
Let $\Sigma\subset M\times\rr^k$ be an $(n-1)$-rectifiable set, where $n=m+k$ is the dimension of $N$. At a regular point $p\in\Sg$, the unit normal $\xi$ can be decomposed as $\xi=av+bw$, with $a^2+b^2=1$, $v$ tangent to $M$ and $w$ tangent to $\rr^k$. Then the Jacobian of $\varphi_t|\Sg$ is  equal to $t^{k-1}(t^2a^2+b^2)^{1/2}$. For $t\ge 1$ we get
\begin{equation}
\label{eq:pervarphi}
t^kH^{n-1}(\Sg)\ge H^{n-1}(\varphi_t(\Sg))\ge t^{k-1} H^{n-1}(\Sg),
\end{equation}
and the reversed inequalities when $t\le 1$. Similar properties hold for the perimeter. Equality holds in the right hand side of \eqref{eq:pervarphi} if and only if $a=0$, or equivalently if and only if $\xi$ is tangent to $\rr^k$.

An open ball in $\rr^k$ of radius $r>0$ and center $x$ will be denoted by $D(x,r)$. If it is centered at the origin, we set $D(r)=D(0,r)$. We shall also denote by $T(x,r)$ the set $M\times D(x,r)$, and by $T(r)$ the set $M\times D(r)$. Observe that $\varphi_t(T(x,r))=T(tx,tr)$ and that $T(x,r)$ is the tubular neighborhood of radius $r>0$ of $M\times\{x\}$.

Given any set $E\subset N$ of finite perimeter, we can replace it by a \emph{normalized} set $\text{sym}\,E$ by requiring $\text{sym}\,E\cap(\{p\}\times\rr^k)=\{p\}\times D(r(p))$, where $H^k(D(r(p))$ is equal to the $H^k$-measure of $E\cap (\{p\}\times \rr^k)$. For such a set we get

\begin{theorem}
\label{thm:sym}
\mbox{}
\begin{enumerate}
\item $|\text{\normalfont sym}\,E|=|E|$,
\item $P(\text{\normalfont sym}\, E)\le P(E)$.
\end{enumerate}
\end{theorem}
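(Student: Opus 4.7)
Part (i) is immediate from Fubini's theorem: by construction $H^k((\text{sym}\,E)_p)=H^k(D(r(p)))=H^k(E_p)$ for every $p\in M$, so integrating in $p$ gives $|\text{sym}\,E|=|E|$. For part (ii), the plan is to reduce to the case where $\partial E$ is smooth and then carry out a slicing computation. Standard approximation of finite perimeter sets by smooth sets in $L^1$ with convergence of perimeter, combined with the $L^1$-continuity of the symmetrization (one has $|\text{sym}\,E_1\triangle\text{sym}\,E_2|=\int_M|V_1-V_2|\,dH^m$ with $V_i(p):=H^k((E_i)_p)$) and the lower semicontinuity of perimeter, reduces $P(\text{sym}\,E)\le P(E)$ to the smooth case.

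For smooth $E$, decompose the outer unit normal as $\xi=av+bw$ as in the excerpt, and set $V(p):=H^k(E_p)$, so that $r(p)=(V(p)/\omega_k)^{1/k}$. The coarea formula applied to the projection $\pi:M\times\rr^k\to M$ restricted to $\partial E$, whose $m$-dimensional Jacobian equals $|b|$, gives (discarding the nonnegative contribution from the locus $\{b=0\}$)
\begin{equation*}
P(E)\ge\int_M\int_{\partial E_p}\frac{1}{|b|}\,dH^{k-1}\,dH^m(p).
\end{equation*}
Observing that $1/|b|$ is the norm of the vector $\bigl(1,-(a/|b|)v\bigr)\in\rr\oplus T_pM$ and applying Minkowski's integral inequality fiberwise, together with the identification
\begin{equation*}
\nabla V(p)=-\int_{\partial E_p}\frac{a}{|b|}\,v\,dH^{k-1}
\end{equation*}
(obtained by differentiating the slice volume and testing with vector fields tangent to $M$ and constant in the $\rr^k$ direction), yields
\begin{equation*}
\int_{\partial E_p}\frac{1}{|b|}\,dH^{k-1}\ge\sqrt{P_{\rr^k}(E_p)^2+|\nabla V(p)|^2}.
\end{equation*}

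Parametrizing $\partial(\text{sym}\,E)$ by $(p,\te)\mapsto(p,r(p)\te)$ with $\te\in\esf^{k-1}$, a direct computation of the induced area element produces
\begin{equation*}
P(\text{sym}\,E)=\int_M P_{\rr^k}(D(r(p)))\sqrt{1+|\nabla r|^2}\,dH^m=\int_M\sqrt{P_{\rr^k}(D(r))^2+|\nabla V|^2}\,dH^m,
\end{equation*}
using the pointwise identity $|\nabla V|=P_{\rr^k}(D(r))|\nabla r|$. Combining this with the previous display and the Euclidean isoperimetric inequality $P_{\rr^k}(E_p)\ge P_{\rr^k}(D(r(p)))$ applied slicewise finishes the argument. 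The step I expect to require the most care is the approximation: one needs to arrange smooth approximants so that each of the calculations above is meaningful and then pass to the limit compatibly with the symmetrization; in particular, controlling the "polar" set $\{r(p)=0\}$ (where $\partial(\text{sym}\,E)$ fails to be smooth) and the locus where $\partial E$ becomes tangent to the $M$-fibers is the main technical point.
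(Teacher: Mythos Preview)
Your argument is correct. Part~(i) is indeed Fubini, and for part~(ii) your slicing computation via the coarea formula for $\pi|_{\partial E}$, combined with the vector-valued Minkowski inequality and the fiberwise Euclidean isoperimetric inequality, is a clean and standard route; the identification of $\nabla V$ and the area formula for $\partial(\text{sym}\,E)$ are both right, and your approximation step (using that $E\mapsto\text{sym}\,E$ is $1$-Lipschitz in $L^1$ together with lower semicontinuity of perimeter) is the correct way to pass from smooth sets to general finite-perimeter sets.

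The paper does not actually write out a proof: it simply refers to Burago--Zalgaller (and Maggi for $m=1$) and states that ``the main ingredients are a corresponding inequality for the Minkowski content and approximation of finite perimeter sets by sets with smooth boundary.'' That route is genuinely different from yours: one shows, via the Brunn--Minkowski inequality applied in each $\rr^k$-fiber, that the $\eps$-enlargement of $\text{sym}\,E$ is contained in the symmetrization of the $\eps$-enlargement of $E$, deduces the inequality for the outer Minkowski content, and then uses that for smooth boundaries the Minkowski content equals the perimeter. Both approaches share the same reduction to smooth sets; yours has the advantage of being more explicit about the equality case (it isolates exactly where the Euclidean isoperimetric inequality and the Minkowski inequality must be saturated), while the Minkowski-content argument is shorter and avoids the coarea bookkeeping around the tangential locus $\{b=0\}$ and the polar set $\{r=0\}$ that you correctly flag as the delicate points.
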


The proof of Theorem~\ref{thm:sym} is similar to the one of symmetrization in $\rr^n=\rr^m\times\rr^k$ with respect to one of the factors, see Burago and Zalgaller \cite[\S~9]{MR936419}  (or Maggi \cite{MR2976521} for the case $m=1$). The main ingredients are a corresponding inequality for the Minkowski content and approximation of finite perimeter sets by sets with smooth boundary.

Given $E\subset N$, we denote by $E^*$ its orthogonal projection onto $M$. If $E$ is normalized, and $u:E^*\to\rr^+$ measures the radius of the disk obtained projecting $E\cap(\{p\}\times\rr^k)$ to $\rr^k$, we get, assuming enough regularity on $u$, that
\begin{equation*}
\begin{split}
\vol{E}&=\om_k\int_{E^*} u^k dH^m,
\\
H^{n-1}(\ptl E)&= k\om_k\int_{E^*} u^{k-1}\sqrt{1+|\nabla u|^2} dH^m,
\end{split}
\end{equation*}
where $\omega_k=H^k(D(1))$, and $k\omega_k=H^{k-1}(\esf^{k-1})$. The above formulas imply
\begin{equation*}
\label{eq:patube}
\begin{split}
|T(r)|&=\omega_kr^k H^m(M),
\\
P(T(r))&=k\omega_k r^{k-1} H^m(M),
\end{split}
\end{equation*}
so that 
\begin{equation}
\label{eq:profiletubes}
P(T(r))=k\,\big(\omega_kH^m(M)\big)^{1/k}\,|T(r)|^{(k-1)/k}.
\end{equation}

The \emph{isoperimetric profile} of $M\times\rr^k$ is the function $I:(0,+\infty)\to [0,+\infty)$ defined by
\begin{equation*}
I(v)=\inf\{P(E); |E|=v\}.
\end{equation*}
An \emph{isoperimetric region} is a set $E\subset M\times\rr^k$ satisfying $I(|E|)=P(E)$. Existence of isoperimetric regions in $M\times\rr^k$ is guaranteed by a result of Morgan \cite[p.~129]{MR2455580}, since the quotient of $M\times\rr^k$ by its isometry group is compact. From his arguments, it also follows that isoperimetric regions are bounded in $M\times\rr^k$ (see also \cite{MR2979606}). From \eqref{eq:profiletubes} we get
\begin{equation}
\label{eq:upperprofile}
I(v)\le k\,\big(\omega_kH^m(M)\big)^{1/k}\,v^{(k-1)/k},
\end{equation}
for any $v>0$.
The regularity of isoperimetric regions in Riemannian manifolds is well-known, see Morgan \cite{MR1997594} and Gonzales-Massari-Tamanini \cite{MR684753}. The boundary is regular except for a singular set of vanishing $H^{n-7}$ measure. The following properties of the isoperimetric profile hold

\begin{proposition}
\label{prp:I_{Cyl} is contin}
The isoperimetric profile $I$ of $M\times\rr^k$ is non-decreasing and continuous.
\end{proposition}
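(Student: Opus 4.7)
The plan is in two steps: first establish monotonicity by applying the anisotropic contraction $\varphi_t$ to an isoperimetric region of the larger volume, and then deduce continuity from monotonicity together with a H\"older-type bound on $I(v_2)-I(v_1)$ obtained by appending a small disjoint geodesic ball placed far along the $\rr^k$-factor.

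\textbf{Monotonicity.} Fix $0<v_1<v_2$ and pick an isoperimetric region $E$ with $|E|=v_2$ and $P(E)=I(v_2)$, whose existence was recalled above. Let $t=(v_1/v_2)^{1/k}\le 1$. Then \eqref{eq:volvarphi} gives $|\varphi_t(E)|=t^kv_2=v_1$, and the $t\le 1$ version of \eqref{eq:pervarphi} gives $P(\varphi_t(E))\le t^{k-1}P(E)$; since $t\le 1$ and $k\ge 1$ we have $t^{k-1}\le 1$. Hence $I(v_1)\le P(\varphi_t(E))\le P(E)=I(v_2)$.

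\textbf{Continuity.} I claim there is a constant $C>0$ with $I(v_2)-I(v_1)\le C(v_2-v_1)^{(n-1)/n}$ whenever $v_1<v_2$ and $v_2-v_1$ is small. Let $E_1$ be an isoperimetric region of volume $v_1$; since $E_1$ is bounded in $N$ (again by the existence reference), its projection on $\rr^k$ is contained in some disk $D(R)$. Fix a point $q=(p,x)\in N$ with $|x|>R+1$ and take a geodesic ball $B=B(q,r)\subset N$ with $r$ small enough that $B$ lies inside a normal coordinate chart and is disjoint from $E_1$; arrange $r$ so that $|B|=v_2-v_1$. Small geodesic balls in the smooth manifold $N$ behave like Euclidean balls of the same radius, so $r\asymp(v_2-v_1)^{1/n}$ and $P(B)\le C(v_2-v_1)^{(n-1)/n}$. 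Since $E_1$ and $B$ have positive distance, $P(E_1\cup B)=P(E_1)+P(B)=I(v_1)+P(B)$ and $|E_1\cup B|=v_2$, proving the claim. Combined with monotonicity, this forces $|I(v_n)-I(v)|\le C|v_n-v|^{(n-1)/n}\to 0$ whenever $v_n\to v>0$, so $I$ is continuous.

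\textbf{Main obstacle.} Neither step is delicate. The only point of care is in the continuity step, where the appended ball $B$ must be genuinely disjoint from $E_1$ so that the two perimeters add. This is possible precisely because isoperimetric regions are bounded in $N$ and the $\rr^k$-factor provides arbitrarily distant positions in which to place $B$.
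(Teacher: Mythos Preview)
Your monotonicity argument is identical to the paper's. For continuity the paper takes a slightly different route: it observes that a monotone function can only have jump discontinuities, then, at a given volume $v$, flows an isoperimetric region $E$ by a smooth vector field supported in the regular part of $\partial E$ to produce a continuous comparison function $f$ defined on a neighborhood of $v$ with $I\le f$ and $f(v)=I(v)$, which rules out jumps.

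Your approach---attaching a small geodesic ball placed far out along the $\rr^k$-factor to an isoperimetric region of the smaller volume---is also correct and has the advantage of yielding a quantitative local H\"older estimate $0\le I(v_2)-I(v_1)\le C(v_2-v_1)^{(n-1)/n}$ rather than bare continuity. It also avoids invoking boundary regularity; you use only that isoperimetric regions exist and are bounded in $N$, both of which are stated just before the proposition. The paper's variation argument, by contrast, is shorter to state but leans on the regularity of $\partial E$ to support a smooth deformation. Both are standard; either is acceptable here.
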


\begin{proof}
Let $v_1<v_2$, and $E\subset N$ an isoperimetric region of volume $v_2$. Let $0<t<1$ so that $\vol{\varphi_t(E)}=v_1$. By \eqref{eq:pervarphi} we have
\[
I(v_1)\le P(\varphi_t(E))\le P(E)=I(v_2).
\]
This shows that $I$ is non-decreasing.

Let us prove now the right-continuity of $I$ at $v$. Consider an isoperimetric region $E$ of volume $v$. Take a smooth vector field $Z$ with support in the regular part of the boundary of $E$ such that $\int_E\divv Z\neq 0$. The flow $\{\varphi_t\}_{t\in\rr}$ of $Z$ satisfies $(d/dt)|_{t=0} |\varphi_t(E)|\neq 0$. Using the Inverse Function Theorem we obtain a smooth family $\{E_w\}$, for $w$ near $v$, with $|E_w|=w$ and $E_v=E$. The function $f(w)=P(E_w)$ satisfies $f\ge I$ and $I(v)=f(v)$. This implies that $I$ is right-continuous at $v$ since, for $v_i\downarrow v$, we have
\[
I(v)=f(v)=\lim_{i\to\infty} f(v_i)\ge \lim_{i\to\infty} I(v_i)\ge I(v),
\]
by the monotonicity of $I$.

To prove the left-continuity of $I$ at $v$ we take a sequence of isoperimetric regions $E_i$ with $v_i=|E_i|\uparrow v$ and we consider balls $B_i$ disjoint from $E_i$ so that $|E_i\cup B_i|=|E_i|+|B_i|$. Then $I(v)\le P(E_i\cup B_i)=I(v_i)+P(B_i)\le I(v)+P(B_i)$ by the monotonicity of $I$, and the left-continuity follows by taking limits since $\lim_{i\to\infty}P(B_i)=0$.
\end{proof}

We shall also use the following well-known isoperimetric inequalities in $M$ and $M\times\rr^k$

\begin{lemma}[{\cite{du-st}}]
\label{lem:I_C(v)>cv}
Given $0<v_0<H^m(M)$, there exists a constant $a(v_0)>0$
such that
\begin{equation*}
H^{m-1}(\ptl E)\ge a(v_0)\,H^m(E)
\end{equation*}

for any set $E\subset M$ satisfying $0<H^m(E)<v_0$.
\end{lemma}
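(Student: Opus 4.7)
The plan is to reduce the inequality to a lower bound on the function $v\mapsto I_M(v)/v$, where $I_M$ denotes the isoperimetric profile of the compact manifold $M$. Indeed, by definition of $I_M$, one has $H^{m-1}(\partial E)\ge I_M(H^m(E))$ for every finite-perimeter set $E\subset M$, so it suffices to exhibit a constant $a(v_0)>0$ with
\[
\frac{I_M(v)}{v}\ge a(v_0)\qquad\text{for every }0<v\le v_0.
\]

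First I would establish three properties of $I_M$ on $(0,H^m(M))$. Positivity: since $M$ is compact and isoperimetric regions exist (Morgan's compactness argument, already invoked in the excerpt), $I_M(v)=0$ would force a nontrivial set of zero perimeter, which is impossible in a connected compact manifold. Continuity: this is obtained exactly as in Proposition~\ref{prp:I_{Cyl} is contin}, by deforming a given isoperimetric region through a smooth vector field supported in the regular part of its boundary, yielding a continuous upper barrier for $I_M$ near any $v$. Blow-up at $0$: by comparison with small Euclidean balls embedded in normal coordinates (or by the local Euclidean isoperimetric inequality applied to components of small volume), there exists a constant $c_M>0$ such that $I_M(v)\ge c_M\,v^{(m-1)/m}$ for $v$ sufficiently small, so $I_M(v)/v\ge c_M\,v^{-1/m}\to\infty$ as $v\to 0^+$.

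Combining these three properties, the function $f(v):=I_M(v)/v$ is continuous and strictly positive on $(0,v_0]$ and satisfies $\lim_{v\to 0^+}f(v)=+\infty$. Hence $f$ attains a positive minimum on $(0,v_0]$; define $a(v_0)$ to be any positive number no larger than this minimum. This gives $I_M(v)\ge a(v_0)\,v$ for every $v\in(0,v_0)$, and applying this with $v=H^m(E)$ yields the claimed inequality.

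The only delicate step is the asymptotic $I_M(v)/v\to\infty$ as $v\to 0^+$, but this is a standard small-volume consequence of local quasi-isometry of $M$ to $\mathbb{R}^m$ together with the Euclidean isoperimetric inequality, so the argument is essentially routine. Continuity of $I_M$ could also be avoided entirely by proving directly that $f$ is bounded below on $[\eta,v_0]$ for every $\eta>0$ (using existence of minimizers and positivity) and handling $(0,\eta]$ via the small-volume estimate.
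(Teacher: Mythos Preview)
The paper does not actually prove this lemma; it is stated with a citation to Duzaar--Steffen \cite{du-st} and used as a black box. So there is no ``paper's proof'' to compare against, and your task is simply to give a correct argument. Your overall strategy---reduce to a positive lower bound for $v\mapsto I_M(v)/v$ on $(0,v_0]$ using positivity of $I_M$ on $(0,H^m(M))$ together with the small-volume asymptotic $I_M(v)\gtrsim v^{(m-1)/m}$---is sound and is the standard way to obtain this Cheeger-type inequality.

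One caveat: your appeal to Proposition~\ref{prp:I_{Cyl} is contin} for the continuity of $I_M$ is not quite legitimate. That proposition proves continuity of the profile of $N=M\times\rr^k$ by first showing \emph{monotonicity} via the anisotropic dilation $\varphi_t$, and then observing that a monotone function with a continuous upper barrier cannot jump. On $M$ alone there is no such dilation, $I_M$ is not monotone, and the upper-barrier argument by itself only gives upper semicontinuity. Continuity of $I_M$ on a compact manifold is of course well known, but it requires the complementary input (lower semicontinuity of perimeter plus compactness of $BV$ on $M$). Your final paragraph already anticipates this: the alternative you sketch---handle $(0,\eta]$ via the small-volume isoperimetric inequality and $[\eta,v_0]$ via existence of minimizers and strict positivity of $I_M$---avoids continuity entirely and is the cleanest route. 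With that adjustment (and the implicit assumption that $M$ is connected, needed for $I_M>0$), your proof is correct.
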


\begin{lemma}
\label{lem:Iv0}
Given $v_0>0$, there exists a constant $c(v_0)>0$ so that
\begin{equation}
\label{eq:Iv0}
I(v)\ge c(v_0)\,v^{(n-1)/n}
\end{equation}
for any $v\in (0,v_0)$.
\end{lemma}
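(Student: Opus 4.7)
The plan is to establish the Euclidean-type inequality \eqref{eq:Iv0} for volumes below a threshold $v_1$ by transferring the Euclidean Sobolev inequality to $N=M\times\rr^k$ via bi-Lipschitz charts and a partition of unity on $M$; then to extend the bound to all of $(0,v_0)$ using the continuity and positivity of $I$ on the compact remainder $[v_1,v_0]$.

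For the setup, compactness of $M$ gives $r_0>0$ and $\Lambda\ge 1$ such that for every $p\in M$ the exponential map $\exp_p$ is $\Lambda$-bi-Lipschitz on the ball of radius $r_0$ in $T_pM$, with image the geodesic ball $B_M(p,r_0)$. Cover $M$ by finitely many such balls $\{B_M(p_i,r_0)\}_{i=1}^N$ and pick a Lipschitz partition of unity $\{\eta_i\}$ subordinate to the cover, with common Lipschitz constant $L$. Each chart $B_M(p_i,r_0)\times\rr^k$ is then $\Lambda$-bi-Lipschitz equivalent to an open subset of $\rr^n$ via $\exp_{p_i}\times\mathrm{id}$. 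For $E\subset N$ of finite perimeter with $|E|=v$, set $f_i:=\eta_i\chi_E\in BV(N)$. Pushing $f_i$ through the chart to $\rr^n$ and applying the Euclidean Sobolev inequality for $BV$-functions yields
\[
\|f_i\|_{L^{n/(n-1)}(N)}\le C\,\|Df_i\|(N),\qquad C=C(n,\Lambda).
\]
The Leibniz rule for $BV$ gives $\|Df_i\|(N)\le\int_N\eta_i\,dP(E,\cdot)+L\,|E|$. Summing over $i$, using $\sum_i\eta_i=1$ (so that $\chi_E=\sum_i f_i$ and $\sum_i\int_N\eta_i\,dP(E,\cdot)=P(E)$), together with the triangle inequality in $L^{n/(n-1)}$,
\[
v^{(n-1)/n}=\|\chi_E\|_{L^{n/(n-1)}}\le\sum_i\|f_i\|_{L^{n/(n-1)}}\le C\bigl(P(E)+NLv\bigr).
\]
Rearranging, $P(E)\ge C^{-1}v^{(n-1)/n}-NLv$. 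Choosing $v_1:=\min\{v_0,(2CNL)^{-n}\}$, the linear term is absorbed for $v\in(0,v_1]$ and one gets $I(v)\ge(2C)^{-1}v^{(n-1)/n}$.

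If $v_1<v_0$, the remaining range $[v_1,v_0]$ is handled by continuity: the ratio $I(v)/v^{(n-1)/n}$ is continuous on $[v_1,v_0]$ by Proposition~\ref{prp:I_{Cyl} is contin} and strictly positive, since any finite-perimeter set $E\subset N$ with $0<|E|<\infty$ satisfies $P(E)>0$ (the measure-theoretic boundary between the density-$1$ and density-$0$ sets of $E$ has positive $H^{n-1}$-measure). Hence it attains a positive minimum $c_2$, and setting $c(v_0):=\min\{(2C)^{-1},c_2\}$ proves \eqref{eq:Iv0}. The main technical point is the passage from the local Euclidean Sobolev inequality to a global one on $N$: compactness of $M$ is what keeps the bi-Lipschitz constants of $\exp_{p_i}$ uniform in $i$, and the error term $NLv$ coming from $\nabla\eta_i$ is of lower order than $v^{(n-1)/n}$, which is precisely why the argument must be restricted to small volumes before it is extended by continuity.
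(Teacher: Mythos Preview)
Your proof is correct. The paper itself gives only a one–line justification: ``Lemma~\ref{lem:Iv0} follows from the facts that $I(v)$ is strictly positive for $v>0$ and is asymptotic to the Euclidean isoperimetric profile when $v$ approaches $0$.'' Your argument has exactly the same two–step structure (Euclidean behavior for small volumes, positivity and continuity of $I$ on the remaining compact range), but where the paper simply invokes the known asymptotic $I(v)\sim I_{\rr^n}(v)$ as $v\to 0$, you supply a self–contained derivation of the needed lower bound by pushing the Euclidean $BV$–Sobolev inequality through bi-Lipschitz exponential charts and a partition of unity on $M$. This is slightly weaker than the full asymptotic (you do not identify the constant), but it is exactly what the lemma requires, and it avoids appealing to an external result. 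In short: same strategy, your version is more explicit.
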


Lemma \ref{lem:Iv0} follows from the facts that $I(v)$ is strictly positive for $v>0$ and asymptotic to the Euclidean isoperimetric profile when $v$ approaches $0$.

\section{Large isoperimetric regions in $M\times\rr^k$}

In this Section we shall prove that normalized isoperimetric regions of large volume, when scaled down to have constant volume $v_0$, have their boundaries uniformly close to the boundary of the normalized tube of volume $v_0$.

If $E\subset N$ is any finite perimeter set and $T(E)$ is the tube with the same volume as $E$, we define
\begin{equation*}
E^-=E\cap T(E),\quad E^+ =E\setminus T(E).
\end{equation*}
Let $t>0$, and $\Om = \varphi_t(E)$. Since $\varphi_t(E^+)=\Om^+$, \eqref{eq:volvarphi} implies
\begin{equation}
\label{eq:fracEOm}
\frac{\vol{E^+}}{\vol{E}}=\frac{\vol{\Om^+}}{\vol{\Om}}. 
\end{equation}
A similar equality holds replacing $E^+$ by $E^-$.

\begin{proposition}
\label{prp:P>cV}
Let $\{E_i\}_{i\in\nn}$ be a sequence of normalized sets with volumes $\vol{E_i}\to\infty$. Let $v_0>0$ and $0<t_i<1$ so that $\vol{\varphi_{t_i}(E_i)}=v_0$ for all $i\in\nn$, and let $T$ be the tube of volume $v_0$ around $M_0$.

If $\varphi_{t_i}(E_i)$ does not converge to $T$ in the $L^1$-topology, then there is a constant $c>0$, only depending on $\{E_i\}_{i\in\nn}$, so that, passing to a subsequence, there holds
\begin{equation}
\label{eq:P>cV}
\h^{n-1}(\ptl E_i)\ge c \vol{E_i}.
\end{equation}
\end{proposition}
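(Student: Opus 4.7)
The plan is to translate the hypothesis into an $L^1$-deviation of the fibered-volume function $g_i(p):=H^k(E_i\cap(\{p\}\times\rr^k)) = \omega_k u_i(p)^k$ from its mean, and then to convert this deviation into a lower bound for $P(E_i)$ via a BV Poincar\'e inequality on $M$, which is a consequence of Lemma~\ref{lem:I_C(v)>cv}.

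First, I would render the hypothesis quantitative. Since $\varphi_{t_i}(E_i)$ and $T$ share the volume $v_0$, we have $|\varphi_{t_i}(E_i)\triangle T| = 2|\varphi_{t_i}(E_i)^+|$, so failure of $L^1$-convergence yields, after passing to a subsequence, a fixed $\delta'>0$ with $|\varphi_{t_i}(E_i)^+|\ge \delta'v_0$. The identity \eqref{eq:fracEOm} transfers this to $|E_i^+|\ge \delta'|E_i|$, and hence $|E_i\triangle T(E_i)|\ge 2\delta'|E_i|$. Writing $R_i=(|E_i|/(\omega_kH^m(M)))^{1/k}$ for the radius of $T(E_i)$ and $u_i$ for the radius function of $E_i$, Cavalieri's principle yields
\begin{equation*}
\int_M|g_i-\bar g_i|\,dH^m = \omega_k\int_M|u_i^k-R_i^k|\,dH^m = |E_i\triangle T(E_i)|\ge 2\delta'|E_i|,
\end{equation*}
where $\bar g_i = \omega_k R_i^k = |E_i|/H^m(M)$ is the average of $g_i$.

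Next, I would invoke a BV Poincar\'e inequality on $M$. The coarea formula gives $|Df|(M)=\int_0^\infty P_M(\{f>t\})\,dt$, and applying Lemma~\ref{lem:I_C(v)>cv} to whichever of $\{f>t\}$ or its complement has the smaller measure produces
\begin{equation*}
P_M(\{f>t\})\ge a\min\bigl(H^m(\{f>t\}),\,H^m(M)-H^m(\{f>t\})\bigr)
\end{equation*}
for some $a=a(M)>0$. Integrating in $t$ and using the identity $\int_0^\infty\min(\cdots)\,dt = \int_M|f-\text{med}(f)|\,dH^m$, together with $\int|f-\text{med}(f)|\ge\frac{1}{2}\int|f-\bar f|$ (which follows from $H^m(M)\,|\bar f-\text{med}(f)| = |\int(f-\text{med}(f))|\le\int|f-\text{med}(f)|$), yields the Poincar\'e inequality $\int_M|f-\bar f|\,dH^m\le C|Df|(M)$ for all $f\in BV(M)$, with $C=2/a$ depending only on $M$. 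Applied to $g_i$, this gives $|Dg_i|(M)\ge(2\delta'/C)|E_i|$.

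Finally, I would close the loop by dominating $|Dg_i|(M)$ from above by $P(E_i)$. In the smooth case, $|\nabla g_i|=k\omega_k u_i^{k-1}|\nabla u_i|\le k\omega_k u_i^{k-1}\sqrt{1+|\nabla u_i|^2}$ is pointwise below the integrand in the perimeter formula, giving $|Dg_i|(M)\le P(E_i)$. For general finite perimeter $E_i$ the same inequality follows by BV slicing/Fubini applied to the projection $M\times\rr^k\to M$, or by approximating $E_i$ in $L^1$ by smooth normalized sets with converging perimeters. Chaining the estimates yields $P(E_i)\ge (2\delta'/C)|E_i|$, which is \eqref{eq:P>cV} with $c = 2\delta'/C$; this constant depends on the sequence through $\delta'$ and on $M$ through $C$. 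The main obstacle is the last step in the rough setting, for which one appeals to the BV slicing machinery from \cite{MR2976521} or works with a careful approximation of $E_i$ by smooth normalized sets.
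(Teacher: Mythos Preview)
Your argument is correct and gives the stated bound, but it follows a genuinely different route from the paper's. The paper slices $\partial E_i$ radially in the $\rr^k$ factor: using the coarea formula for the distance to $M\times\{0\}$, it writes $H^{n-1}(\partial E_i)\ge\int_{r_i}^\infty H^{n-2}(\partial E_i\cap\partial T(s))\,ds$, shows by a short contradiction argument that the shadows $(E_i\cap\partial T(s))^*\subset M$ have $H^m$-measure bounded by some $w<H^m(M)$ for all $s\ge r_i$, applies Lemma~\ref{lem:I_C(v)>cv} slice by slice to get $H^{m-1}(\partial(E_i\cap\partial T(s))^*)\ge a\,H^m((E_i\cap\partial T(s))^*)$, and integrates back in $s$ to obtain $H^{n-1}(\partial E_i)\ge a\,|E_i^+|>ac_1|E_i|$. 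You instead slice in the $M$-direction via the fibre-volume function $g_i$, recast Lemma~\ref{lem:I_C(v)>cv} as a BV Poincar\'e inequality on $M$, and close with $|Dg_i|(M)\le P(E_i)$. Your approach is a bit more streamlined and sidesteps the claim about shadow measures; the paper's approach is more self-contained in that it avoids invoking the BV slicing inequality $|Dg_i|(M)\le P(E_i)$ in the Riemannian product setting, which is the one step you correctly flag as requiring care (it does follow from the divergence-theorem definition of perimeter after cutting off in the $\rr^k$ factor, so there is no real obstacle). Both arguments rest on the same linear isoperimetric inequality on $M$, deployed in orthogonal directions.
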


\begin{proof}
Assume $T=M\times D(r)$, and set $\Om_i=\varphi_{t_i}(E_i)$. As $\vol{\Om_i}=\vol{T}$, we get $2\,\vol{\Om_i^+}=\vol{\Om_i\triangle T}$ and, since $\vol{\Om_i\triangle T}$ does not converge to $0$, the sequence $\vol{\Om_i^+}$ does not converge to $0$ either. Let $c_1>0$ be a constant so that $\limsup_{i\to\infty} (\vol{\Om_i^+}/|\Om_i|)>c_1$. From \eqref{eq:fracEOm} we obtain
\begin{equation}
\label{eq:P>cV1}
\limsup_{i\to\infty}\frac{\vol{E_i^+}}{\vol{E_i}}>c_1.
\end{equation}

Now we claim that
\begin{equation}
\label{eq:P>cVclaim}
\liminf_{i\to\infty}\h^m((\Om_i\cap\ptl T)^*)<\h^m(M).
\end{equation}
To prove \eqref{eq:P>cVclaim} we argue by contradiction. Assume  that $\liminf_{i\to\infty}\h^m((\Om_i\cap\ptl T)^*)=\h^m(M)$. As $\Om_i$ is normalized, we have $(\Om_i\cap\ptl T)^*\subset (\Om_i\cap T)^*$ and so $(T\setminus\Om_i)\subset (M\setminus (\Om_i\cap\ptl T)^*)\times D(r)$. This implies $\limsup_{i\to\infty}\vol{T\setminus\Om_i}=0$. Since $\vol{\Om_i}=\vol{T}$, we get $\lim_{i\to\infty}\vol{\Om_i\triangle T}=2\,\lim_{i\to\infty}\vol{T\setminus\Om_i}=0$, a contradiction that proves the claim.

Hence there exists $w\in (0, H^m(M))$ so that
\begin{equation}
\label{eq:P>cV2}
\liminf_{i\to\infty}\h^m((\Om_i\cap\ptl T)^*)<w.
\end{equation}
Let $T(r_i)$ be the normalized tube with $\vol{T(r_i)}=\vol{E_i}$. As $\Om_i\cap T=\varphi_{t_i}(E_i\cap T(r_i))$, we have $(E_i\cap\ptl T(r_i))^*=(\Om_i\cap\ptl T)^*$; from \eqref{eq:P>cV2} we get $\liminf_{i\to\infty} H^m((E_i\cap \ptl T(r_i))^*)<w$, and we obtain
\begin{equation}
\label{eq:P>cV3}
\liminf_{i\to\infty}\h^m((E_i\cap\ptl T(s))^*)<w ,\qquad \forall s\ge r_i. 
\end{equation}
This last step to go from the particular $r_i$ to every $s\ge r_i$ is easy to check as, for any normalized set $E=\bigcup_{p\in E^*} (\{p\}\times D(r(p)))$, we have $(E\cap\ptl T(s))^*=\{p\in M : r(p)\ge s\}$, therefore $(E\cap\ptl T(s))^*\subset (E\cap\ptl T(r))^*$ whenever $s\ge r$.

The above arguments imply, replacing the original sequence by a subsequence, that
\begin{equation}
\label{eq:P>cV4}
\vol{E_i^+}>c_1\,\vol{E_i},\qquad H^m((E_i\cap\ptl T(s))^*)<w, \qquad i\in\nn,\, s\ge r_i.
\end{equation}

Let $a=a(w)$ be the constant in Lemma \ref{lem:I_C(v)>cv}. For the elements of the subsequence satisfying \eqref{eq:P>cV4} we have
\begin{equation*}
\begin{split}
\h^{n-1} (\ptl E_i)&\ge\h^{n-1}(\ptl E_i\cap (N\setminus T(r_i)))
\\
&\ge \int_{r_i}^{\infty}\h^{n-2} (\ptl{E_i\cap \ptl T(s)})\,ds
\\
&\ge \int_{r_i}^{\infty}\h^{n-2} (\ptl{(E_i\cap \ptl T(s))})\,ds
\\
&=\int_{r_i}^{\infty}\h^{m-1} (\ptl{(E_i\cap\ptl T(s))^*)}\,\h^{k-1} (\ptl D(s))\,ds
\\
&\ge \int_{r_i}^{\infty}a\,\h^{m} ((E_i\cap\ptl T(s))^*)\,\h^{k-1} (\ptl D(s))\,ds
\\
&=a\int_{r_i}^{\infty}\h^{n-1} (E_i\cap \ptl T(s))\,ds=a\,\vol{E_i^+} > a\,c_1\vol{E_i},
\end{split}
\end{equation*}
thus proving the result. In the previous inequalities we have used the coarea formula for the distance function to $M\times\{0\}$; that $\ptl(E_i\cap\ptl T(s))\subset \ptl E_i\cap\ptl T(s)$, where the first $\ptl$ denotes the boundary operator in $\ptl T(s)$; the fact that for an $O(k)$-invariant set $F$ we have $F\cap\ptl T(s)=(F\cap\ptl T(s))^*\times\ptl D(s)$, and so $H^{r+k-1}(F\cap\ptl T(s))=H^r((F\cap\ptl T(s))^*)\, H^{k-1}(\ptl D(s))$; that $(\ptl(E_i\cap\ptl T(s)))^*=\ptl (E_i\cap\ptl T(s))^*$; and the isoperimetric inequality on $M$ given in Lemma~\ref{lem:I_C(v)>cv}.
\end{proof}

\begin{corollary}
\label{cor:OmitoTL1}
Let $\{E_i\}_{i\in\nn}$ be a sequence of normalized isoperimetric sets with volumes $\lim_{i\to\infty}\vol{E_i}=\infty$. Let $v_0>0$ and $0<t_i<1$ such that $\Om_i=\varphi_{t_i}(E_i)$ has volume $v_0$ for all $i\in\nn$. Then $\Om_i\to T$ in the $L^1$-topology, where $T$ is the tube of volume $v_0$.
\end{corollary}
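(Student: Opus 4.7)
The plan is a short proof by contradiction, combining Proposition~\ref{prp:P>cV} with the a priori upper bound on the isoperimetric profile given by comparison with tubes in \eqref{eq:upperprofile}.

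Suppose that $\Om_i$ does \emph{not} converge to $T$ in $L^1$. Since each $E_i$ is normalized and $\vol{E_i}\to\infty$, Proposition~\ref{prp:P>cV} applies and provides a constant $c>0$ and a subsequence (which we still denote by $\{E_i\}$) such that
\begin{equation*}
\h^{n-1}(\ptl E_i)\ge c\,\vol{E_i}\qquad\text{for all }i\in\nn.
\end{equation*}
On the other hand, each $E_i$ is an isoperimetric region, so $P(E_i)=I(\vol{E_i})$, and \eqref{eq:upperprofile} (comparison with a tube of the same volume) yields
\begin{equation*}
\h^{n-1}(\ptl E_i)=P(E_i)=I(\vol{E_i})\le k\,\big(\om_k\,\h^m(M)\big)^{1/k}\,\vol{E_i}^{(k-1)/k}.
\end{equation*}
Combining the two bounds gives
\begin{equation*}
c\,\vol{E_i}\le k\,\big(\om_k\,\h^m(M)\big)^{1/k}\,\vol{E_i}^{(k-1)/k},
\end{equation*}
i.e.\ $\vol{E_i}^{1/k}\le k c^{-1}\big(\om_k\h^m(M)\big)^{1/k}$, contradicting $\vol{E_i}\to\infty$.

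The only place where the isoperimetric hypothesis is really used is in the upper bound $P(E_i)\le k(\om_k\h^m(M))^{1/k}\vol{E_i}^{(k-1)/k}$; the lower bound from Proposition~\ref{prp:P>cV} is linear in $\vol{E_i}$ and holds without assuming isoperimetricity. I do not foresee any genuine obstacle: the main step was already carried out in Proposition~\ref{prp:P>cV}, and here we only need to match a linear lower bound on $P(E_i)$ against a sublinear upper bound coming from the tube comparison, which is incompatible with $\vol{E_i}\to\infty$.
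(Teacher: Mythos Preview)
Your argument is correct and essentially identical to the paper's own proof: assume non-convergence, invoke Proposition~\ref{prp:P>cV} for the linear lower bound, and combine with the sublinear upper bound \eqref{eq:upperprofile} to contradict $\vol{E_i}\to\infty$. The only thing the paper adds explicitly is the remark that regularity of isoperimetric regions gives $P(E_i)=H^{n-1}(\ptl E_i)$, which you use without comment.
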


\begin{proof}
Regularity results for isoperimetric regions imply that $P(E_i)=H^{n-1}(\ptl E_i)$, choosing as representative of every isoperimetric set the closure of the set of density one points. If $\Om_i$ does not converge to $T$ in the $L^1$-topology then, using \eqref{eq:P>cV} in Proposition~\ref{prp:P>cV} and \eqref{eq:upperprofile}, we get
\[
c\,\vol{E_i}\le P(E_i)\le k\,\big(\omega_kH^m(M)\big)^{1/k}\,\vol{E_i}^{(k-1)/k}
\]
for a subsequence, thus yielding a contradiction by letting $i\to\infty$ since $\vol{E_i}\to\infty$.
\end{proof}

Using density estimates, we shall show now that the $L^1$ convergence of the scaled isoperimetric regions can be improved to Hausdorff convergence. 

In a similar way to Leonardi and Rigot \cite[p.~18]{le-ri} (see also \cite{MR3335407} and David and Semmes \cite{MR1625982}), given $E\subset N$, we define a function $h:\rr^k\times(0,+\infty)\to\rr^+$ by
\begin{equation*}
h(x,R)=\frac{\min\big\{ \vol{E\cap T(x,R)},\vol{T(x,R)\setminus E} \big\}}{R^n},
\end{equation*}
for $x\in\rr^k$ and $R>0$. We remark that the quantity $h(x,R)$ is not homogeneous in the sense of being invariant by scaling since $h(x,R)\le \tfrac{1}{2} (k\omega_k H^m(M))\,R^{k-n}$, which goes to infinity when $R$ goes to $0$. When the set $E$ should be explicitly mentioned, we shall write
\begin{equation*}
h(E,x,R)=h(x,R).
\end{equation*}
\begin{lemma}
\label{lem:lerilm42prd}
\mbox{}
Let  $E\subset N$ be an isoperimetric region of volume $v>v_0$.  Let $\tau>1$  such that $\Om=\varphi_\tau^{-1}(E)$ has volume $v_0$. Choose $\eps$ so that
\begin{equation}
\label{eq:epsfine}
0<\eps<\min\bigg\{v_0,\bigg(\frac{c(v_0)\,v_0^{1/k}}{2k\omega_kH^m(M)}\bigg)^n,\bigg(\frac{c(v_0)}{8n}\bigg)^n\bigg\},
\end{equation}
where $c(v_0)$ is as in \eqref{eq:Iv0}.

Then, for any $x\in\rr^k$ and $R\le 1$ so that $h(\Om,x,R)\le\eps$, we get
\begin{equation*}
h(\Om,x,R/2)=0.
\end{equation*}
Moreover, in case $h(\Om,x,R)=\vol{\Om\cap T(x,R))}\,R^{-n}$, we get $\vol{\Om\cap T(x,R/2)}=0$ and, in case $h(\Om,x,R)=\vol{T(x,R)\setminus \Om}\,R^{-n}$, we have $\vol{T(x,R/2)\setminus \Om}=0$.
\end{lemma}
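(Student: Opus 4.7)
I will follow the classical Leonardi--Rigot density-estimate scheme: set $f(r)=\vol{\Om\cap T(x,r)}$ for $r\in[R/2,R]$, derive a differential inequality of the form $f'(r)\ge c\,f(r)^{(n-1)/n}$, and integrate it to force $f(R/2)=0$. I treat the case $h(\Om,x,R)=\vol{\Om\cap T(x,R)}\,R^{-n}$ in detail; the complementary case $h(\Om,x,R)=\vol{T(x,R)\setminus\Om}\,R^{-n}$ is entirely analogous, after replacing $\Om$ by $T(x,R)\setminus\Om$ (or, equivalently, using $\Om\cup T_\tau$ as comparison set on the $E$-side).

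\textbf{Lower bound on $P(\Om\cap T(x,r))$.} Since $R\le 1$ and $f$ is non-decreasing, $f(r)\le\eps R^n\le\eps<v_0$, so Lemma~\ref{lem:Iv0} applies to $\Om\cap T(x,r)$:
\[
c(v_0)\,f(r)^{(n-1)/n}\le P(\Om\cap T(x,r))\le H^{n-1}(\ptl^*\Om\cap T(x,r))+H^{n-1}(\Om^{(1)}\cap\ptl T(x,r)),
\]
and the coarea formula applied to the distance to $M\times\{x\}$ gives $H^{n-1}(\Om^{(1)}\cap\ptl T(x,r))\le f'(r)$ for a.e.\ $r$.

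\textbf{Upper bound on $H^{n-1}(\ptl^*\Om\cap T(x,r))$ (the main step).} Set $T_\tau=T(\tau x,\tau r)$ and choose $\mu>1$ with $\mu^k=v/(v-\tau^k f(r))$, so that $\varphi_\mu(E\setminus T_\tau)$ has the same volume as $E$. Isoperimetricity of $E$ and \eqref{eq:pervarphi} give $P(E)\le P(\varphi_\mu(E\setminus T_\tau))\le\mu^k\,P(E\setminus T_\tau)$; writing $P(E\setminus T_\tau)=P(E)-H^{n-1}(\ptl^*E\cap T_\tau)+H^{n-1}(E^{(1)}\cap\ptl T_\tau)$ and rearranging yields
\[
H^{n-1}(\ptl^*E\cap T_\tau)\le(1-\mu^{-k})\,P(E)+H^{n-1}(E^{(1)}\cap\ptl T_\tau).
\]
I then exploit the Jacobian $\tau^{k-1}(\tau^2a^2+b^2)^{1/2}$ of $\varphi_\tau$: on $\ptl T(x,r)$ the normal lies in $\rr^k$, so $a=0$ and the Jacobian equals $\tau^{k-1}$ exactly, while on the reduced boundary of $\Om$ it is at least $\tau^{k-1}$. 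Transferring the displayed inequality back through $\varphi_\tau^{-1}$, using $v=\tau^k v_0$, and bounding $P(E)$ by \eqref{eq:upperprofile}, all powers of $\tau$ cancel and one obtains
\[
H^{n-1}(\ptl^*\Om\cap T(x,r))\le C_0\,f(r)+f'(r),\qquad C_0=k\,(\om_k H^m(M))^{1/k}\,v_0^{-1/k}.
\]

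\textbf{ODE step and conclusion.} Combining the two bounds, $c(v_0)f(r)^{(n-1)/n}\le C_0\,f(r)+2f'(r)$. The middle constraint in \eqref{eq:epsfine} ensures $C_0f(r)\le\tfrac12 c(v_0)f(r)^{(n-1)/n}$ whenever $f(r)\le\eps$, hence $f'(r)\ge\tfrac{c(v_0)}{4}f(r)^{(n-1)/n}$ a.e. If $f(R/2)>0$ then monotonicity gives $f>0$ on $[R/2,R]$, so $(f^{1/n})'\ge c(v_0)/(4n)$ and integration from $R/2$ to $R$ yields $f(R)\ge\bigl(c(v_0)/(8n)\bigr)^n R^n$, which contradicts $f(R)\le\eps R^n$ by the last constraint in \eqref{eq:epsfine}. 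Hence $f(R/2)=0$. The principal obstacle is the upper-bound step: a priori $(1-\mu^{-k})P(E)\sim v^{(k-1)/k}$ diverges as $v\to\infty$, and only the anisotropy of $\varphi_\tau$, with its Jacobian attaining the minimum value $\tau^{k-1}$ precisely on the coordinate spheres $\ptl T$, produces the cancellation of $\tau$-factors needed to reach a bound whose constant depends only on $v_0,m,k,M$.
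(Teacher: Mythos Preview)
Your proof is correct and follows essentially the same route as the paper's: the same comparison set $\varphi_{\lambda(r)}(E\setminus T(\tau x,\tau r))$ (your $\varphi_\mu(E\setminus T_\tau)$), the same exploitation of the anisotropic Jacobian of $\varphi_\tau$ to cancel the $\tau$-powers and obtain a $v$-independent differential inequality $f'\ge c\,f^{(n-1)/n}$, and the same integration to force $f(R/2)=0$. For the complementary case the paper likewise uses $E\cup T(\tau x,\tau r)$ together with the monotonicity of $I$, exactly as your parenthetical remark indicates; the only cosmetic difference is that the paper packages the lower and upper bounds into a single chain of inequalities rather than separating them as you do.
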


\begin{proof}
Using Lemma~\ref{lem:Iv0} we get a positive constant $c(v_0)$ so that \eqref{eq:Iv0} is satisfied (i.e., $I(w)\ge c(v_0)\,w^{(n-1)/n}$, for all $0\le w\le v_0$).

Assume first that
\[
h(x,R)=h(\Om,x,r)=\frac{\vol{\Om\cap T(x,R)}}{R^n}.
\]
Define
\begin{equation*}
m(r)=\vol{\Om\cap T(x,r)},\quad 0<r\le R.
\end{equation*}
The function $m(r)$ is non-decreasing and, for $r\le R\le 1$, we get
\begin{equation}
\label{eq:m(t)<eps}
m(r)\le m(R)\le\vol{\Om\cap T(x,R)}\le\eps\,R^n\le\eps<v_0
\end{equation}
by \eqref{eq:epsfine}. Hence $v_0-m(r)>0$ for $0<r\le R$.

By the coarea formula, when $m'(r)$ exists, we get
\begin{equation*}
m'(r)=\frac{d}{dr}\int_0^r\h^{n-1}(\Om\cap \ptl T(x,s))\,ds=\h^{n-1}(\Om\cap \ptl T(x,r)).
\end{equation*}
Now define
\begin{equation*}
\la(r)=\frac{v_0^{1/{k}}}{(v_0-m(r))^{1/{k}}}=\frac{v^{1/{k}}}{\vol{E\setminus T(\tau x,\tau r)}^{1/{k}}}\ge 1,
\end{equation*}
and
\begin{equation*}
\Om(r)=\varphi_{\la(r)}(\Om\setminus T(x,r)),
\end{equation*}
so that $\vol{\Om(r)}=\vol{\Om}$. Then
\begin{equation*}
E(r)=\varphi_\tau(\Om(r))=\varphi_{\la(r)}(E\setminus T(\tau x,\tau r)),
\end{equation*}
and $\vol{E(r)}=\vol{E}$. Then, using \eqref{eq:pervarphi} for $\la(r)\ge 1$ and standard properties of finite~perimeter sets \cite[Lemmas~12.22 and 15.12]{MR2976521}, we have
\begin{equation}
\label{eq:I(v)1}
\begin{split}
I(v)&\le P(E(r))\le \la(r)^k\,\big(P(E\setminus T(\tau x,\tau r))\big)
\\
&\le\frac{v_0}{v_0-m(r)}\,\big(P(E)-P(E\cap T(\tau x,\tau r))+2H^{n-1}(E\cap\ptl T(\tau x,\tau r))\big).
\end{split}
\end{equation}
Since $\tau\ge 1$ and $E\cap\ptl T(\tau x,\tau r)$ is part of a cylinder, using \eqref{eq:pervarphi} again we get
\begin{align*}
P(E\cap T(\tau x, \tau r)&\ge \tau^{k-1}P(\Om\cap T(x,r))\ge \tau^{k-1}c(v_0)\,m(r)^{(n-1)/n},
\\
H^{n-1}(E\cap\ptl T(\tau x,\tau r))&=\tau^{k-1}H^{n-1}(\Om\cap\ptl T(x,r))=\tau^{k-1} m'(r).
\end{align*}
Replacing these expressions in \eqref{eq:I(v)1}, since $P(E)=I(v)$ and $\tau^kv_0=v$, we have
\begin{equation}
\label{eq:m'(r)1}
\begin{split}
2m'(r)&\ge m(r)^{(n-1)/n}\,\bigg(c(v_0)-\frac{m(r)^{1/n}}{\tau^{k-1}v_0}\,I(v)\bigg)
\\
&\ge m(r)^{(n-1)/n}\,\bigg(c(v_0)-\frac{m(r)^{1/n}}{v_0^{1/k}}\,\frac{I(v)}{v^{(k-1)/k}}\bigg)
\\
&\ge m(r)^{(n-1)/n}\,\bigg(c(v_0)-\frac{\eps^{1/n}}{v_0^{1/k}}\,(k\omega_k H^m(M))\bigg)
\\
&
\ge \frac{c(v_0)}{2}\,m(r)^{(n-1)/n},
\end{split}
\end{equation}
where we have also used $m(r)\le\eps$, \eqref{eq:upperprofile}, and \eqref{eq:epsfine}

If there is $r\in[R/2,R]$ such that $m(r)=0$ then, by the monotonicity of the function $m(r)$, we would conclude $m(R/2)=0$ as well. So we assume $m(r)>0$ in $[R/2,R]$. Then by \eqref{eq:m'(r)1}, we get
\[
\frac{c(v_0)}{4}\le \frac{m'(t)}{m(t)^{(n-1)/n}},\,\qquad \h^1\text{-a.e.}
\]
By \eqref{eq:m(t)<eps} we get $m(R)\le \eps R^n$. Integrating between $R/2$ and $R$,
\[
c(v_0)\,R/8\le n\,(m(R)^{1/{n}}-m(R/2)^{1/{n}})\le n\,m(R)^{1/{n}}\le n\,\eps^{1/n} R.
\]
This is a contradiction, since $\eps<(c(v_0)/8n)^{n}$ by \eqref{eq:epsfine}. So the proof in case $h(x,R)=\vol{\Om\cap T(x,R)}\,R^{-n}$ is completed.

Now we deal with the case $h(x,R)=\vol{T(x,R)\setminus \Om}\,R^{-n}$. Define
\begin{equation*}
m(r)=\vol{T(x,r)\setminus\Om}.
\end{equation*}
Then $m(r)$ is a non-decreasing function and
\begin{equation}
\label{eq:m'(r)2}
m'(r)=H^{n-1}(\Om^c\cap\ptl T(x,r))=\frac{1}{\tau^{k-1}}\,H^{n-1}(E^c\cap\ptl T(\tau x,\tau r)),
\end{equation}
since $E^c\cap\ptl T(\tau x,\tau r)$ is part of a tube. We also have $m(r)\le m(R)\le \eps R^n\le\eps<v_0$ by \eqref{eq:epsfine}. Observe that
\begin{equation}
\label{eq:case21}
P(E\cup T(\tau x,\tau r)\le P(E)-P(T(\tau x,\tau r)\setminus E)+2H^{n-1}(E^c\cap\ptl E(\tau x,\tau r)).
\end{equation}
Since $\varphi_\tau(T(x,r)\setminus \Om)=T(\tau x,\tau r)\setminus E$ and $\tau\ge 1$, we get
\begin{equation}
\label{eq:comp2}
\begin{split}
P(T(\tau x,\tau r)\setminus E)&=P(\varphi_\tau(T(x,r)\setminus\Om))\\
&\ge \tau^{k-1}P(T(x,r)\setminus\Om)\ge \tau^{k-1}\,c(v_0)\,m(r)^{(n-1)/n}.
\end{split}
\end{equation}
Now, using that $I$ is a non-decreasing function we easily obtain $P(E)=I(v)\le I(\vol{E\cup T(\tau x,\tau r)})\le P(E\cup T(\tau x,\tau r))$. We estimate $P(E\cup T(\tau x,\tau r))$ from \eqref{eq:case21}. Using \eqref{eq:comp2} and \eqref{eq:m'(r)2}, we get
\begin{equation}
\begin{split}
I(v)=P(E)\le P(E\cup T(\tau x,\tau r))\le I(v)-\tau^{k-1}c(v_0)\,m(r)^{(k-1)/k}+2\tau^{k-1}m'(r),
\end{split}
\end{equation}

and so
\[
\frac{c(v_0)}{2}\le \frac{m'(r)}{m(r)^{(n-1)/n}},\,\qquad \h^1\text{-a.e.}
\]
By \eqref{eq:m(t)<eps} we get $m(R)\le \eps R^n$. Integrating between $R/2$ and $R$,
\[
c(v_0)\,R/4\le n\,(m(R)^{1/{n}}-m(R/2)^{1/{n}})\le n\,m(R)^{1/{n}}\le n\,\eps^{1/n}R,
\]
and we get a contradiction since by \eqref{eq:epsfine} we have $\eps< (c(v_0)/(8n))^{n}<(c(v_0)/(4n))^n$. This concludes the proof.
\end{proof}
Let $F\subset N$, then we define $F_r=\{x\in N: d(x,F)\le r\}$. We improve now the $L^1$-convergence of normalized isoperimetric regions obtained in Corollary~\ref{cor:OmitoTL1} to Hausdorff convergence of their boundaries
\begin{lemma}
\label{lem:OmitoTHdrf}
Let $\{E_i\}_{i\in\nn}$ be a sequence of isoperimetric sets in $N$ with $\lim_{i\to\infty}\vol{E_i}=\infty$. Let $v_0>0$ and $\{t_i\}_{i\in\nn}$ such that $\lim_{i\to\infty}t_i=0$ and $ \vol{\Om_i}=v_0$ for all $i\in\nn$, where $\Om_i=\varphi_{t_i}(E_i)$. Then for every $r>0$, $\ptl\Om_i\subset (\ptl T)_r$, for large enough $i\in \nn$, where $T$ is the tube of volume $v_0$.
\end{lemma}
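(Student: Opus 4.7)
The plan is to argue by contradiction, combining the $L^1$-convergence provided by Corollary~\ref{cor:OmitoTL1} with the density estimate of Lemma~\ref{lem:lerilm42prd}. Since the $E_i$ are (implicitly) normalized, so are the $\Om_i$, and Corollary~\ref{cor:OmitoTL1} gives $\Om_i\to T$ in $L^1$, where $T=M\times D(s)$ with $\om_k s^k H^m(M)=v_0$.

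Suppose the conclusion fails. Then there is some $r>0$ and, after passing to a subsequence, points $p_i=(q_i,x_i)\in\ptl\Om_i$ with $d(p_i,\ptl T)>r$ for every $i$. Since $T$ is a tube, $d(p_i,\ptl T)=\big||x_i|-s\big|$, so after a further subsequence we are in one of two cases: $|x_i|>s+r$ for all $i$ (exterior case) or $|x_i|<s-r$ for all $i$ (interior case). Fix $\eps$ as in \eqref{eq:epsfine} and set $r'=\min\{r,1\}$.

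In the exterior case the tube $T(x_i,r')$ is disjoint from $T$, so
\[
\vol{\Om_i\cap T(x_i,r')}\le\vol{\Om_i\setminus T}=\tfrac{1}{2}\vol{\Om_i\triangle T}\longrightarrow 0;
\]
hence $h(\Om_i,x_i,r')<\eps$ for large $i$ and Lemma~\ref{lem:lerilm42prd} forces $\vol{\Om_i\cap T(x_i,r'/2)}=0$. In the interior case $T(x_i,r')\subset T$, so $\vol{T(x_i,r')\setminus\Om_i}\to 0$ and Lemma~\ref{lem:lerilm42prd} gives $\vol{T(x_i,r'/2)\setminus\Om_i}=0$. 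In either case $p_i$ lies in an open tube on which $\Om_i$ has either zero or full measure, contradicting $p_i\in\ptl\Om_i$: for a regular isoperimetric region, every topological boundary point meets every neighborhood in a set of positive measure for both $\Om_i$ and its complement.

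The main technical point will be ensuring that the density hypothesis of Lemma~\ref{lem:lerilm42prd} can be verified uniformly in $i$. This is exactly what the dichotomy on $|x_i|$ accomplishes: it guarantees that the whole tube $T(x_i,r')$ lies on one side of $\ptl T$, so that its intersection with (respectively, its complement in) $\Om_i$ is dominated by the single global quantity $\vol{\Om_i\triangle T}$, which vanishes by Corollary~\ref{cor:OmitoTL1} independently of $x_i$. The restriction $r'\le 1$ is dictated by the hypothesis of Lemma~\ref{lem:lerilm42prd} and is harmless since $(\ptl T)_{r'}\subset(\ptl T)_r$.
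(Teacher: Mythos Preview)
Your argument is correct and follows essentially the same route as the paper's proof: contradiction, $L^1$-convergence from Corollary~\ref{cor:OmitoTL1}, the interior/exterior dichotomy relative to $T$, and the density estimate of Lemma~\ref{lem:lerilm42prd} to reach a contradiction at a boundary point. The only cosmetic difference is that the paper introduces a shrinking sequence $r_i\to 0$ with $\vol{\Om_i\triangle T}<r_i^{\,n+1}$ and applies Lemma~\ref{lem:lerilm42prd} at radius $r_i$, whereas you work at the fixed radius $r'=\min\{r,1\}$ and use $\vol{\Om_i\triangle T}\to 0$ directly; your version is a harmless simplification.
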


\begin{proof}
Since $\vol{\Om_i}=v_0$, using \eqref{eq:epsfine} we can choose a uniform $\eps>0$ so that Lemma~\ref{lem:lerilm42prd} holds with this $\eps$ for all $\Om_i$, $i\in\nn$. This means that, for any $x\in N$ and $0<r\le 1$, whenever $h(\Om_i,x,r)\le\eps$ we get $h(\Om_i,x,r/2)=0$.

As  $\Om_i\to  T$  in $L^1(N)$ by Corollary \ref{cor:OmitoTL1}, we can choose a sequence $r_i\to 0$ so that 
\begin{equation}
\label{eq:EitoE Haus3}
 \vol{\Om_i\,\triangle\, T}<r_i^{n+1}.
\end{equation}
Now fix some $0<r<1$. We reason by contradiction assuming that, for some subsequence, there exist 
\begin{equation}
\label{eq:EisubE Haus1}
 x_i\in \ptl\Om_i\setminus (\ptl T)_{r}.
\end{equation}
We distinguish two cases. 

First case: $x_i\in N\setminus T$, for a subsequence.  Choosing $i$ large enough,  \eqref{eq:EisubE Haus1} implies $T(x_i,r_i)\cap T=\emptyset$ and \eqref{eq:EitoE Haus3} yields

\begin{equation*}
 \vol{\Om_i\cap T(x_i,r_i)}\le  \vol{\Om_i\setminus T}\le  \vol{\Om_i\triangle T}< r_i^{n+1}.
\end{equation*}
So, for $i$ large enough, we get
\[
h(\Om_i,x_i,r_i)=\frac{ \vol{\Om_i\cap T(x_i,r_i)}}{r_i^n}<r_i\le \eps.
\]
By Lemma~\ref{lem:lerilm42prd}, we conclude that $ \vol{\Om_i\cap T(x_i,r_i/2)}=0$, a contradiction.

Second case: $x_i\in T$. Choosing $i$ large enough, \eqref{eq:EisubE Haus1} implies $T(x_i,r_i)\subset T$ and so
\begin{equation*}
  \vol{T(x_i,r_i)\setminus \Om_i }\le  \vol{T\setminus \Om_i },\quad\text{for every}\,\, r_i<r.
\end{equation*}
Then, by \eqref{eq:EitoE Haus3},
we get
\begin{equation*}
 \vol{T(x_i,r_i)\setminus \Om_i }\le  \vol{ T\setminus \Om_i}\le  \vol{\Om_i\triangle T}< r_i^{n+1}.
\end{equation*}
So, for $i$ large enough, we get
\[
h(\Om_i,x_i,r_i)=\frac{ \vol{T(x_i,r_i)\setminus \Om_i }}{r_i^n}<r_i\le \eps.
\]
By Lemma~\ref{lem:lerilm42prd}, we conclude that $\vol{T(x_i,r_i/2)\setminus \Om_i }=0$, and we get again contradiction that proves the Lemma.
\end{proof}

\section{Strict $O(k)$-stability of tubes with large radius}

In this Section we consider the orthogonal group $O(k)$ acting on the product $M\times\rr^k$ through the second factor.

Let $\Sg\subset M\times\rr^k$ be a compact hypersurface with constant mean curvature. It is well-known that $\Sg$ is a critical point of the area functional under volume-preserving deformations, and that $\Sg$ is a second order minimum of the area~under volume-preserving variations if and only if
\begin{equation}
\label{eq:2ndvar}
\int_\Sg \big(|\nabla u|^2-q\,u^2\big)\,d\Sg\ge 0,
\end{equation}
for any smooth function $u:\Sg\to\rr$ with mean zero on $\Sg$. In the above formula $\nabla$ is~the gradient on $\Sg$ and $q$ is the function

$\text{Ric}(\xi,\xi)+|\sg|^2$,
where $|\sg|^2$ is the sum of the squared principal curvatures in $\Sg$, $\xi$ is a unit vector field normal to $\Sg$, and $\text{Ric}$ is the Ricci curvature on $N$. 

A hypersurface satisfying \eqref{eq:2ndvar} is usually called \emph{stable} and condition \eqref{eq:2ndvar} is referred to as \emph{stability condition}. In case $\Sg$ is $O(k)$-invariant we can consider an equivariant stability condition: we shall say that $\Sg$ is \emph{strictly $O(k)$-stable} if there exists a positive constant $\la>0$ such that
\begin{equation*}
\int_\Sg \big(|\nabla u|^2-q\,u^2\big)\,d\Sg\ge\la\int_\Sg u^2\,d\Sg
\end{equation*}
for any $O(k)$-invariant function $u:\Sg\to\rr$ with mean zero.

We consider now the tube $T(r)=M\times D(r)$. The boundary of $T(r)$ is the $O(k)$-invariant cylinder $\Sg(r)=M\times \ptl D(r)$, with $(k-1)$ principal curvatures equal to $1/r$. Hence its mean curvature is equal to $(k-1)/r$ and the squared norm of the second fundamental form satisfies $|\sg|^2=(k-1)/r^2$. The inner unit normal to $\Sg(r)$ is the normal to $\ptl D(r)$ in $\rr^k$ (it is tangent to the factor $\rr^k$). This implies $\text{Ric}(\xi,\xi)=0$.

We have the following result

\begin{lemma}
\label{lem:strictlystable}
The cylinder $\Sg(r)$ is strictly $O(k)$-stable if and only if
\begin{equation*}
r^2>\frac{k-1}{\lambda_1(M)},
\end{equation*}
where $\la_1(M)$ is the first positive eigenvalue of the Laplacian in $M$.
\end{lemma}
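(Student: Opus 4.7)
The plan is to reduce the $O(k)$-equivariant stability condition on the cylinder $\Sigma(r)=M\times\partial D(r)$ to a Rayleigh-quotient inequality on $M$ alone, and then match it against the variational characterization of $\lambda_1(M)$.

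First I would describe $O(k)$-invariant functions on $\Sigma(r)$. Since $O(k)$ acts trivially on $M$ and transitively on $\partial D(r)$, every $O(k)$-invariant $u:\Sigma(r)\to\rr$ factors as $u(p,y)=\tilde u(p)$ for a function $\tilde u:M\to\rr$, and $|\nabla_{\Sg}u|^2=|\nabla_M\tilde u|^2$ along the $M$-factor since the cylinder $\Sigma(r)$ is a Riemannian product. By Fubini, for any $O(k)$-invariant test function one has
\begin{equation*}
\int_{\Sg(r)}u^2\,d\Sg=H^{k-1}(\ptl D(r))\int_M\tilde u^2\,dH^m,
\qquad
\int_{\Sg(r)}|\nabla u|^2\,d\Sg=H^{k-1}(\ptl D(r))\int_M|\nabla\tilde u|^2\,dH^m,
\end{equation*}
and the mean-zero condition $\int_{\Sg(r)}u\,d\Sg=0$ is equivalent to $\int_M\tilde u\,dH^m=0$.

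Next, using the values $\text{Ric}(N,N)=0$ and $|\sg|^2=k/r^2$ computed just before the statement, the potential is $q\equiv k/r^2$ on $\Sg(r)$. After cancelling the common factor $H^{k-1}(\ptl D(r))$, the strict $O(k)$-stability inequality becomes
\begin{equation*}
\int_M|\nabla\tilde u|^2\,dH^m-\frac{k}{r^2}\int_M\tilde u^2\,dH^m\ge \la\int_M\tilde u^2\,dH^m
\end{equation*}
for every $\tilde u\in C^\infty(M)$ with $\int_M\tilde u\,dH^m=0$, with some $\la>0$.

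Finally, I would invoke the Rayleigh characterization
\begin{equation*}
\la_1(M)=\inf\bigg\{\frac{\int_M|\nabla\tilde u|^2\,dH^m}{\int_M\tilde u^2\,dH^m}: \tilde u\not\equiv 0,\ \int_M\tilde u\,dH^m=0\bigg\}.
\end{equation*}
For the ``if'' direction, if $r^2>k/\la_1(M)$ then the constant $\la:=\la_1(M)-k/r^2>0$ makes the displayed inequality hold for every mean-zero $\tilde u$, proving strict $O(k)$-stability. For the ``only if'' direction, testing the stability inequality with a first eigenfunction $\tilde u_1$ (which is automatically mean-zero, being $L^2$-orthogonal to constants) yields $\la_1(M)-k/r^2\ge\la>0$, i.e.\ $r^2>k/\la_1(M)$.

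No step is really an obstacle; the only points to be careful about are that $O(k)$ acts transitively on $\ptl D(r)$ (so invariant functions pull back from $M$), that the product structure of $\Sigma(r)$ keeps the intrinsic gradient unchanged, and that the first eigenfunction of $M$ is an admissible test function in the stability inequality.
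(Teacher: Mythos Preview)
Your proposal is correct and follows essentially the same route as the paper: reduce $O(k)$-invariant test functions on $\Sigma(r)$ to mean-zero functions on $M$, use $q=k/r^2$, and compare with the Rayleigh quotient for $\lambda_1(M)$. If anything, you are more explicit than the paper, since you spell out the ``only if'' direction by testing with a first eigenfunction, whereas the paper just writes the chain of inequalities yielding $\lambda=\lambda_1(M)-k/r^2$ and declares the lemma proved.
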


\begin{proof}
Let $\Sg=\Sg(r)=M\times D(r)$. Observe that an $O(k)$-invariant function with mean zero on $\Sg$ is determined by a function $u:M\to\rr$ with $\int_Mu\,dM=0$. Hence
\[
\begin{split}
\int_\Sg\big(|\nabla u|^2-q\,u^2\big)\,d\Sg&=k\omega_kr^{k-1}\int_M\big(|\nabla_Mu|^2-\frac{k-1}{r^2}\,u^2\big)\,dM
\\
&
\ge k\omega_kr^{k-1}\bigg(\la_1(M)-\frac{k-1}{r^2}\bigg)\,\int_M u^2\,dM
\\
&=\bigg(\la_1(M)-\frac{k-1}{r^2}\bigg)\,\int_\Sg u^2\,d\Sg.
\end{split}
\]
This proves the Lemma.
\end{proof}

Using results by White \cite{MR1305283} and Grosse-Brauckmann \cite{MR1432843} we get

\begin{theorem}
\label{thm:mainstable}
Let $T$ be a normalized tube so that $\Sg=\ptl T$ is a strictly $O(k)$-stable cylinder. Then there exists $r>0$ so that any $O(k)$-invariant finite perimeter set $E$ with $\vol{E}=\vol{T}$ and $\ptl E\subset T_r$ has larger perimeter than $T$ unless $E=T$.
\end{theorem}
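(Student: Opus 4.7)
\smallskip
\noindent\textbf{Proof plan.} I argue by contradiction. Assume that for a sequence $r_i\downarrow 0$ there exist $O(k)$-invariant finite perimeter sets $E_i\ne T$ with $|E_i|=|T|$, $\partial E_i\subset T_{r_i}$, and $P(E_i)\le P(T)$. Since $|E_i|<\infty$ and $\partial E_i\subset T_{r_i}$, one has $E_i\subset T_{r_i}$; thus $|E_i\triangle T|\le 2|T_{r_i}\setminus T|\to 0$, and hence $E_i\to T$ in $L^1$.

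The first main step is to realize each $\partial E_i$, for $i$ large, as an $O(k)$-invariant smooth normal graph $\{\exp_p(u_i(p)\,N(p)):p\in\Sigma\}$ over $\Sigma$ with $u_i\not\equiv 0$ and $\|u_i\|_{C^2(\Sigma)}\to 0$. After replacing each $E_i$ by an $O(k)$-invariant perimeter minimizer in the constrained class---existence by direct methods, smoothness by the regularity theory of Gonzales--Massari--Tamanini \cite{MR1250498}---the compactness and regularity results of White \cite{MR1305283} and Grosse-Brauckmann \cite{MR1432843}, applied in the equivariant CMC setting, yield the graphical form. Lower semicontinuity combined with $P(E_i)\le P(T)$ delivers $P(E_i)\to P(T)$, the input needed for smooth convergence.

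Granted this graph representation, I compute the second-order expansion. The volume constraint yields $\int_\Sigma u_i\,d\Sigma=O(\|u_i\|_{L^2}^2)$. Since $\Sigma$ is a critical point of $J(E):=P(E)-H|E|$ with $H=k/r$, Taylor expansion gives
\[
P(E_i)-P(T)=J(E_i)-J(T)=\tfrac{1}{2}\int_\Sigma\bigl(|\nabla u_i|^2-q\,u_i^2\bigr)\,d\Sigma+O(\|u_i\|_{H^1}^3).
\]
Decomposing $u_i=c_i+v_i$ with $c_i=|\Sigma|^{-1}\int_\Sigma u_i=O(\|u_i\|_{L^2}^2)$ and $\int_\Sigma v_i=0$, the function $v_i$ is $O(k)$-invariant with mean zero. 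Strict $O(k)$-stability then yields $\int_\Sigma(|\nabla v_i|^2-q\,v_i^2)\,d\Sigma\ge\lambda\|v_i\|_{L^2}^2$; the constant piece contributes only $-q\,c_i^2|\Sigma|=O(\|u_i\|_{L^2}^4)$ and the cross term vanishes, so for $i$ large
\[
\int_\Sigma\bigl(|\nabla u_i|^2-q\,u_i^2\bigr)\,d\Sigma\ge\tfrac{\lambda}{2}\|u_i\|_{L^2}^2.
\]
Consequently $P(E_i)-P(T)\ge(\lambda/4)\|u_i\|_{L^2}^2-C\|u_i\|^3>0$ for $u_i\not\equiv 0$ and $\|u_i\|$ small, contradicting $P(E_i)\le P(T)$.

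The main obstacle is the graph representation step: the $E_i$ are neither CMC nor minimizing a priori, so Allard-type regularity does not apply directly. The reduction to a genuine perimeter minimizer in the constrained class, combined with the equivariant White--Grosse-Brauckmann compactness/regularity theorem, is the substantive input; once that is in place, the second-variation comparison closes the argument routinely.
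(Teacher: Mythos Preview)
Your argument is correct and follows the same overall strategy as the paper: argue by contradiction, replace the hypothetical counterexamples by minimizers in a tubular neighborhood of $\Sigma$, invoke White's almost-minimizing regularity to upgrade $L^1$-convergence to $C^{1,\alpha}$ graphical convergence over $\Sigma$, and then derive a contradiction from the strict positivity of the second variation.

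There are two cosmetic differences worth noting. First, rather than minimizing perimeter under the volume constraint and then decomposing $u_i=c_i+v_i$ to handle the non-mean-zero part, the paper follows Grosse--Brauckmann and works with the penalized functional $F_C=\text{area}+H\,\text{vol}+\tfrac{C}{2}(\text{vol}-|T|)^2$; for $C$ large its second variation is strictly positive on \emph{all} $O(k)$-invariant directions, which absorbs your $c_i$-bookkeeping into a single stroke. Second, to ensure the minimizers are $O(k)$-invariant the paper symmetrizes (Theorem~\ref{thm:sym}) the unconstrained $F_C$-minimizers, whereas you minimize directly in the $O(k)$-invariant class; both are legitimate. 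Your explicit Taylor expansion is exactly what underlies the phrase ``contradicting the positivity of the second variation'' in the paper, so the content is the same.

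One small point you should make explicit: after replacing $E_i$ by a constrained minimizer $E_i'$, you need $E_i'\ne T$. If every minimizer in the tubular neighborhood equals $T$, then $T$ is the unique minimizer and the theorem follows immediately; otherwise pick $E_i'\ne T$ and run your graph argument. This case split is implicit in White's formulation as well.
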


\begin{proof}
Since $\Sg$ is strictly $O(k)$-stable, Grosse-Brauckmann \cite[Lemma~5]{MR1432843} implies that, for some $C>0$, $\Sg$ has strictly positive second variation for the functional
\[
F_C=\text{area}+H\,\text{vol}+\frac{C}{2}\,(\text{vol}-\text{vol}(T))^2,
\]
in the sense that the second variation of $F_C$ in the normal direction of a function $u$ satisfies
\[
\delta^2_u F_C=\int_\Sg\big(|\nabla u|^2-q\,u^2\big)\,d\Sg+C\,\bigg(\int_\Sg u\,d\Sg\bigg)^2\ge \la\int_\Sg u^2\,d\Sg,
\]
for any smooth $O(k)$-invariant function $u$ (see the discussion in the proof of Theorem~2 in Morgan and Ros \cite{MR2652015}). In White's proof of Theorem~3 in \cite{MR1305283} it is observed that a sequence of minimizers of $F_C$ in tubular neighborhoods of radius $1/i$ of $\Sg$ are \emph{almost minimizing}, and hence $C^{1,\alpha}$ submanifolds that converge H\"older differentiably to $\Sg$, contradicting the positivity of the second variation of $\Sg$. Theorem~\ref{thm:sym} implies that the symmetrization of these minimizers are again minimizers. Thus we get a family of $O(k)$-minimizers of $F_C$ converging H\"older differentiably to $\Sg$, thus contradicting the strict $O(k)$-stability of $\Sg$.
\end{proof}

\section{Proof of Theorem \ref{th:main}}

First we claim that there exists $v_0>0$ such that, for any isoperimetric region $E$ of volume $|E|\ge v_0$, the set $\text{sym}\,E$ is a tube.

To prove this, consider a sequence of isoperimetric regions $\{E_i\}_{i\in\nn}$ with $\lim_{i\to\infty}|E_i|=\infty$. We know that $\{\text{sym}\,E_i\}_{i\in\nn}$ are also isoperimetric regions. Let $T=M\times D$ be a strictly $O(k)$-stable tube, that exists by Lemma~\ref{lem:strictlystable}. For large $i$, we scale down the sets $\text{sym}\, E_i$ so that $\Om_i=\varphi_{t_i}^{-1}(\text{sym}\,E_i)$ has the same volume as $T$. As $\text{sym}\,E_i$ is isoperimetric and $t_i>1$, we get from \eqref{eq:upperprofile} and \eqref{eq:pervarphi} that $P(\Om_i)\le P(T)$. By Corollary~\ref{cor:OmitoTL1}, the sets $\{\ptl\Om_i\}_{i\in\nn}$ converge to $\ptl T$ in Hausdorff distance. By Theorem~\ref{thm:mainstable}, $\Om_i=T$ for large $i$ and so $\text{sym}\,E_i$ is a tube. This proves the claim. In particular, $H^m(E\cap(\{p\}\times\rr^k))=H^m(D)$ for any $p\in M$.

Hence the isoperimetric profile satisfies $I(v)=C\,v^{(k-1)/k}$ for the constant $C$ in \eqref{eq:profiletubes} and any  $v\ge v_0$. We conclude that
\begin{equation}
\label{eq:Ivgev0}
I(t^kv)=t^{k-1}I(v), \quad\text{whenever } t^k v\ge v_0.
\end{equation}
Let $E$ be an isoperimetric region with volume $|E|>v_0$, and $t<1$ so that $t^k|E|=v_0$. Then
\[
I(t^k|E|)\le P(\varphi_t(E))\le t^{k-1}P(E)=t^{k-1}I(|E|)
\]
by the inequality corresponding to \eqref{eq:pervarphi} when $t\le 1$. By \eqref{eq:Ivgev0}, equality holds and the unit normal $\xi$ to $\text{reg}(\ptl E)$, the regular part of $\ptl E$, is tangent to the $\rr^k$ factor. This implies that the $m$-Jacobian of the restriction $f$ of the projection $\pi_1:M\times\rr^k\to M$ to the regular part of $\ptl E$ is equal to $1$. By Federer's coarea formula for rectifiable sets \cite[3.2.22]{MR0257325} we get
\[
H^{n-1}(\ptl E)=\int_M H^{k-1}(f^{-1}(p))\,dH^m(p).
\]
Assume that $\text{sym}\, E$ is the tube $T(E)=M\times D$. The Euclidean isoperimetric inequality implies $H^{k-1}(f^{-1}(p))\ge H^{k-1}(\{p\}\times \ptl D)$ and so $H^{n-1}(\ptl E)\ge H^{n-1}(\ptl T(E))$, again by the coarea formula. As $P(E)=P(\text{sym}\,E)=P(T(E))$, we get $H^{k-1}(f^{-1}(p))=H^{k-1}(\ptl D)$ for $H^m$-a.e. $p\in M$ and so $\pi_1^{-1}(p)$ is equal to a disc $\{p\}\times D_p$ for $H^m$- a.e. $p\in M$.

The fact that $\xi$ is tangent to $\rr^k$ in $\text{reg}(\ptl E)$ implies that $\text{reg}(\ptl E)$ is locally a cylinder of the form $U\times S$, where $U\subset M$ is an open set and $S\subset\rr^k$ is a smooth hypersurface. Hence the discs $D_p$ are centered at the same point (i.e., $E$ is the translation of a normalized tube, which proves the theorem).

\begin{remark}
The equivariant version of Theorem~2 in Morgan and Ros \cite{MR2652015}, together with Corollary~\ref{cor:OmitoTL1}, can be used to prove Theorem~\ref{th:main} for small dimensions.

\end{remark}

\bibliography{convex}

\end{document}